\newtheorem{thm}{Theorem}[section]
\newtheorem{cor}{Corollary}[section]
\newtheorem{prop}{Proposition}[section]
\numberwithin{equation}{section}
\def\Z{\mathbb Z}
\def\E{\mathbb E}
 \def\S{\mathbb S}
\def\R{I\!\!R}
\def\H{I\!\!H}
\title{Poisson semigroup and the Gruet formula for the heat kernels on spaces of constant curvature}
\author{Mohamed Vall Ould Moustapha}
\date{
\textsc{Unit\'e de recherche Analyse EDP et Mod\'elisation(A.EDP.M)\\
  D\'epartment de Mathematiques,\\
  Facult\'e des Sciences et Techniques,\\
Universit\'e de Nouakchott,
Nouakchott-Mauritanie.}\\
  \textit{E-mail address}: \texttt{mohamedvall.ouldmoustapha230@gmail.com}.}
\begin{document}
\maketitle
\begin{abstract}
This paper is concerned with the Poisson and heat equations on spaces of constant curvature.
More explicitly we provide new methods for obtaining old and new explicit formulas for
the Poisson and heat semigroups on the Euclidean, spherical  and hyperbolic spaces $\R^n$, $\S^n$ and $\H^n$ . We obtain the Gruet formula for the heat kernels in Euclidean and spherical spaces $\R^n$ and $\S^n$, which are new and we provide a new elementary method to derive the classical
Gruet formula Gruet\cite{Gruet} for the kernel of the heat semigroup on the hyperbolic space $\H^n$.
\end{abstract}
 Math. Subject Classification: 35C05, 35C15\\
 Key Words and Phrases: Heat semigroup, Poisson semigroup, spaces of constant curvature, Euclidean space, Spherical spaces, hyperbolic space, Gruet formula.
\section{Introduction}
The most important partial differential equations in physics and mathematics are Poisson equation, heat equation and the wave equation.
In the last centenary the solutions of the heat, Poisson and wave equations on spaces of constant curvature
have been studied and computed explicitly and there are many interesting articles  published in the mathematics and physics literature.
The wave equation on spaces of constant curvature is considered by Intissar and Ould Moustapha \cite{Intissar-Ould Moustapha1},\cite{Intissar-Ould Moustapha2}, Bunk et al.\cite{Bunk et al.}, Lax-Phillips \cite{Lax-Phillips}, Ould Moustapha \cite{Ould-Moustapha3}, Abdelhay et al.\cite{A-B-M}. The heat equation on the hyperbolic space is studied by Davies-Mandovalos \cite{Davies-Mandouvalos}, Grigor'yan-Noguchi \cite{Grigoryan-Noguchi}, Ikeda-Matsumoto
\cite{Ikeda-Matsumoto} and Gruet\cite{Gruet}. For the heat equation on spaces of constant curvature see Camporesi\cite{Camporesi}, Taylor\cite{Taylor}, Lohoue-Richener\cite{Lohoue-Rychener} , Matsumoto \cite{Matsumoto} and Anker-Ostellari \cite{Anker-Ostellari}.
For a recent work on Poisson equation on the spherical and hyperbolic space the reader can consult Adam et al. \cite{Adam et al.} and Husam-Ould Moustapha \cite{Husam-Ould Moustapha}, Betancor et al. \cite{Betancor et al.}. For recent work on Gruet formulas (see Nizar \cite{Nizar}).\\
The heat and Poisson kernels are the integral kernels of the heat and Poisson semigroups and thus provides solutions
to the heat and Poisson equation based on the Laplace-Beltrami operators on spaces of constant curvature.
The heat kernel on spaces of constant curvature is a transition
probability density of the Brownian motion on theses spaces.
Let $X_t\in \R^n$ the Brownian motion started
at $x$, then the function
$u(t, x) = Exp[u_0(X_t)]$ solves the Cauchy problem \eqref{Heat-Equation}.
Let $X_y\in \R^n$ be the random variable with Cauchy distribution then the function
$U(y, x) = Exp[U_0(X_y)]$ solves the Cauchy problem \eqref{Poisson-Equation}.
These facts lead to physical
significance and applications.
This article deals with the Poisson and heat semigroups $e^{t A_n}$ and $e^{-y\sqrt{-A_n}}$ associated to the Laplace Beltrami operators on spaces of constant curvature(Euclidean spherical and hyperbolic spaces), solving the problems:
\begin{equation}\label{Heat-Equation}\begin{cases} A_n u(t, x)=\frac{\partial }{\partial t}u(t, x)=0, (t, x)\in \R^+\times \Omega_n\\  u(0, x) = u_0(x);~~~~u_0\in C^{\infty}_{0}(\Omega_n)
\end{cases}\end{equation}
and
\begin{equation}\label{Poisson-Equation}\begin{cases} A_n U(y, x)+\frac{\partial^{2}}{\partial y^{2}}U(y, x)=0, (y, x)\in \R^+\times\Omega_n\\  U(0, x) = U_0(x);~~~~U_0\in C^{\infty}_{0}(\Omega_n)
\end{cases},\end{equation}
where $\Omega_n=\R^n$, $\S^n$ or $\H^n$, is the Euclidean, spherical and the hyperbolic space respectively and $A_n=\Delta_n^E, \Delta_n^S$ or $\Delta_n^H $  is the corresponding Laplace Beltrami operator.
\begin{prop}\label{1}Let $e^{t A_n}$ and $e^{-y\sqrt{-{A}_n}}$ be the heat and the Poisson semigroup operators on spaces of constant curvature, the following formulas hold:\\
i) $e^{-y \sqrt{-A_n}}=\frac{y}{\sqrt{\pi}}L_u\left[e^{A_n/4 u}u^{-1/2}\right](y^2)$,\\
ii) $e^{t A_n}=(4 t)^{-1/2}L_{y^2}^{-1}\left[\frac{\sqrt{\pi}e^{-y\sqrt{-A_n}}}{y}\right](1/4 t)$,\\
where $L_u(f(u))(v)$ is the classical Laplace transform computed with respect to $u$ with the variable in transform $v$
and $[L_{v}^{-1}f(v)](u)$ is the Laplace inverse transform computed with respect to $v$ with
 variable in inverse transform $u$.\\
\end{prop}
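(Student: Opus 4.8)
The plan is to derive both formulas from a single scalar subordination identity, lifted to the operator level through the spectral calculus for the self-adjoint, non-positive Laplace--Beltrami operator $A_n$ (so that $-A_n\geq 0$ and $\sqrt{-A_n}$ is well defined).

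For part (i) I would first establish the scalar identity
\[
e^{-y\sqrt{\lambda}}=\frac{y}{\sqrt{\pi}}\int_0^\infty e^{-y^2 u}\,e^{-\lambda/(4u)}\,u^{-1/2}\,du,\qquad \lambda\geq 0,\ y>0.
\]
The cleanest route is the substitution $u=1/(4s)$, which transforms the right-hand side into $\frac{y}{2\sqrt{\pi}}\int_0^\infty s^{-3/2}e^{-y^2/(4s)}e^{-\lambda s}\,ds$, and then to invoke the classical integral $\int_0^\infty s^{-3/2}e^{-a/s-bs}\,ds=\sqrt{\pi/a}\,e^{-2\sqrt{ab}}$ (a special case of the modified Bessel function $K_{1/2}$) with $a=y^2/4$ and $b=\lambda$. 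This returns $e^{-y\sqrt{\lambda}}$ exactly. Recognizing the right-hand side as $\frac{y}{\sqrt{\pi}}L_u\!\left[e^{-\lambda/(4u)}u^{-1/2}\right](y^2)$, I would then substitute the spectral parameter $\lambda$ by $-A_n$; since $-A_n\geq 0$ is self-adjoint, the spectral theorem promotes the scalar identity, valid pointwise on the spectrum, to the operator identity (i).

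For part (ii) I would simply invert (i). Rewriting (i) as $\frac{\sqrt{\pi}}{y}e^{-y\sqrt{-A_n}}=L_u\!\left[e^{A_n/(4u)}u^{-1/2}\right](y^2)$ exhibits $\frac{\sqrt{\pi}}{y}e^{-y\sqrt{-A_n}}$, regarded as a function of the variable $v=y^2$, as the Laplace transform in $u$ of the map $u\mapsto e^{A_n/(4u)}u^{-1/2}$. Applying the inverse Laplace transform $L_v^{-1}$ and using its uniqueness recovers $e^{A_n/(4u)}u^{-1/2}$; evaluating at $u=1/(4t)$ yields $(4t)^{1/2}e^{tA_n}$, which rearranges directly into formula (ii).

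The main obstacle will be the rigorous passage from the scalar identity to the operator identity: one must verify that the subordination integral converges in the appropriate (strong or spectral) operator topology and that the Bochner/spectral integral commutes with the scalar Laplace transform, so that the pointwise-on-spectrum identity genuinely yields an operator equality on the relevant function space. Confirming $-A_n\geq 0$ (which guarantees both $\sqrt{-A_n}$ and convergence of the subordination integral) and securing the injectivity of the inverse Laplace transform in the operator setting are the delicate points; the integral evaluations themselves are routine.
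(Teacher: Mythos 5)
Your proposal is correct and follows essentially the same route as the paper: both rest on the scalar subordination identity $e^{-y\lambda}=\frac{y}{\sqrt{\pi}}\int_0^\infty e^{-uy^2}u^{-1/2}e^{-\lambda^2/4u}\,du$ (which the paper simply cites from Strichartz, p.~50, while you verify it via the $K_{1/2}$-type integral), followed by the substitution of the spectral parameter by $\sqrt{-A_n}$ for (i) and Laplace inversion with $1/4u=t$ for (ii). Your added attention to the spectral-calculus justification and the uniqueness of the inverse Laplace transform is sound but does not change the method.
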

\begin{proof}
To prove i)we use the subordination formula in Strichartz \cite{Strichartz} $p. 50$.\\
\begin{equation}\label{stri1}\frac{e^{-y \lambda}}{y}=\frac{1}{\sqrt{\pi}}\int_0^\infty e^{-u y^2}u^{-1/2}e^{-\lambda^2/4u}du,\end{equation}
or
\begin{equation}\label{stri2}e^{-y \lambda}=\frac{y}{\sqrt{\pi}}L_u
\left[u^{-1/2}e^{-\lambda^2/4u}\right]
(y^2),\end{equation}
taking $\lambda=\sqrt{-A_n}$ in the formula \eqref{stri2}we obtain i).
To see ii) in view of the formula \eqref{stri2} we can write $e^{\frac{-\lambda^2}{4 u}}=u^{1/2}L^{-1}_{y^2}\left(\sqrt{\pi}\frac{e^{-y \lambda}}{y}\right)(u)$,
setting $\lambda=\sqrt{-A_n}$ and $\frac{1}{4u}=t$ in the last formula we have ii).\\
Note that this proposition is contained in the work of Dettman \cite{Dettman} formulas $(1.3)$ and $(1.4)$, see also Brag-Dettman \cite{Brag-Dettman}.
\end{proof}
The remaining of the paper is organized as follows: in section 2, we study the Poisson and heat semigroups, and we give an analogous of the Gruet formulas on the Euclidean space $\R^n$. Section 3 is devoted to the Poisson and heat semigroups on spherical space and the Gruet formula for heat kernel on spherical space  $\S^n$ is obtained. In section 4 we consider the Poisson and heat equations and we give an elementary proof of the classical Gruet formula on the hyperbolic space $\H^n$ .
\section{Heat and Poisson semigroups on the Euclidean space $\R^n$}
In this section we give an old  and new explicit formulas for the heat and Poisson kernels on the Euclidean space $\R^n$.
Let \begin{equation}\label{Heat-Kernel-e}
H^{\E}_n(t, x, x')=(4\pi t)^{-n/2}\exp{\left(-|x-x'|^{2}/4t\right)}
\end{equation}
be the classical heat kernel on the Euclidean space $\R^n$,
we start by the following proposition:
\begin{prop}\label{2}
Let
 $H^{\E}(t, r)= H^{\E}_n(t, x, x')$, $r=|x-x'|$, be the Euclidean heat kernel, for every $t>0, x, x'\in \R^n$, it holds that:
\begin{itemize}
\item i) $ (-\frac{ \partial}{2\pi  r \partial r})H^{\E}_n(t,  r)=H^{\E}_{n+2}(t, r).$\\
\item ii) $ \int_r^\infty \left(r^2-s^2 \right)^{-1/2} H^{\E}_{n+1}(t,  s)2 s ds=H^{\E}_n(t,  r)$.
\item iii) For $n$  odd, $H^{\E}_n(t,  r)= (-\frac{\partial}{2\pi r\partial r})^{(n-1)/2}\left[\frac{e^{-r^2/4 t}}{(4 \pi t)^{1/2}}\right].$
\item iv)For $ n$   even\\
 $H^{\E}_n
 (t,  r)= \left(-\frac{\partial}{2\pi r \partial r}\right)^{(n-2)/2}\int_r^\infty  \left(s^2- r^2\right)^{-1/2} \frac{e^{-s^2/4 t}}{(4 \pi t)^{3/2}} 2 s ds$.
 \item v) For $ n$   even\\
 $H^{\E}_n(t, r)= \int_r^\infty  \left(s^2- r^2\right)^{-1/2}\left(-\frac{\partial}{2\pi s \partial s}\right)^{(n-2)/2} \frac{e^{-s^2/4 t}}{(4 \pi t)^{3/2}} 2 s ds$.
\end{itemize}
\end{prop}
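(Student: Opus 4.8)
The plan is to regard parts i) and ii) as the two computational building blocks and to derive iii)--v) from them by iteration on the parity of $n$. Throughout I write $H^{\E}_n(t,r)=(4\pi t)^{-n/2}e^{-r^2/4t}$ and abbreviate the radial operator by $D:=-\frac{1}{2\pi r}\frac{\partial}{\partial r}$.

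For i) I would simply differentiate. Since $\frac{\partial}{\partial r}e^{-r^2/4t}=-\frac{r}{2t}e^{-r^2/4t}$, one obtains $D\,e^{-r^2/4t}=\frac{1}{4\pi t}e^{-r^2/4t}$, and multiplying by the prefactor $(4\pi t)^{-n/2}$ converts this into $(4\pi t)^{-(n+2)/2}e^{-r^2/4t}=H^{\E}_{n+2}(t,r)$. The factor $\frac{1}{4\pi t}$ produced by $D$ is exactly what lowers $(4\pi t)^{-n/2}$ to $(4\pi t)^{-(n+2)/2}$, which is the entire content of i). For ii) I first read the kernel as $(s^2-r^2)^{-1/2}$ rather than $(r^2-s^2)^{-1/2}$, since the integration runs over $s>r$; with that reading the integrand is real and the formula is consistent with iv)--v). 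I would then substitute $u=s^2-r^2$, $du=2s\,ds$, which turns the left-hand side into $(4\pi t)^{-(n+1)/2}e^{-r^2/4t}\int_0^\infty u^{-1/2}e^{-u/4t}\,du$. The remaining integral is a Gamma integral equal to $(4t)^{1/2}\Gamma(1/2)=(4\pi t)^{1/2}$, and the powers of $(4\pi t)$ recombine to give precisely $(4\pi t)^{-n/2}e^{-r^2/4t}=H^{\E}_n(t,r)$. This substitution is the only genuine computation in the proposition; the integrability near $s=r$ (where the singularity is of order $(s-r)^{-1/2}$) and the Gaussian decay at infinity guarantee absolute convergence for every $t>0$.

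With i) and ii) established, the remaining parts are pure iteration. For iii), with $n$ odd, the seed $\frac{e^{-r^2/4t}}{(4\pi t)^{1/2}}$ equals $H^{\E}_1(t,r)$, and by i) each application of $D$ raises the dimension by $2$; applying $D$ exactly $(n-1)/2$ times (an integer for odd $n$) reaches $H^{\E}_n(t,r)$. For iv), with $n$ even, the seed $\frac{e^{-s^2/4t}}{(4\pi t)^{3/2}}$ equals $H^{\E}_3(t,s)$, so the inner integral is the $n=2$ instance of ii) and equals $H^{\E}_2(t,r)$; then $(n-2)/2$ applications of $D$ in the variable $r$ lift it to $H^{\E}_n(t,r)$. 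For v) the two operations are composed in the opposite order: applying $D$ in the variable $s$ under the integral sends $H^{\E}_3(t,s)$ to $H^{\E}_{n+1}(t,s)$ by iterating i) pointwise in $s$, after which ii) collapses the integral to $H^{\E}_n(t,r)$.

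I do not expect any deep obstacle here, since each of iv) and v) is a clean composition of the two lemmas taken in opposite orders, so neither requires interchanging differentiation with integration. The points that need care are purely bookkeeping: correcting the sign inside the kernel of ii), checking the parity so that the exponents $(n-1)/2$ and $(n-2)/2$ are integers, and confirming that the $(s-r)^{-1/2}$ singularity together with the Gaussian tail keeps every integral (and, where it is differentiated, every derivative in $r$) absolutely convergent. Granting these routine justifications, the whole proposition reduces to the single differentiation of i) and the single substitution of ii), propagated by induction on the parity of $n$.
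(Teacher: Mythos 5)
Your proof is correct and takes essentially the same approach as the paper: direct differentiation for i), the substitution $u=s^2-r^2$ reducing ii) to a Gamma integral, and iteration of these two facts (in the two possible orders) for iii)--v), which the paper asserts without detail. Your correction of the kernel in ii) to $(s^2-r^2)^{-1/2}$ is also consistent with the paper's own computation, which silently uses that form.
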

\begin{proof}
i) $\frac{\partial}{\partial r} H^{\E}_n(t, r)=\frac{-2r}{4 t}\frac{\exp{(-r^{2}/4t)}}{(4\pi t)^{n/2}}= -2r\pi H^{\E}_{n+2}(t, r)$.\\
ii) $ \int_r^\infty \left(s^2-r^2 \right)^{-1/2} H^{\E}_{n+1}(t,  s)s ds=\frac{1}{2}\int_0^\infty z^{-1/2}H^{\E}_{n+1}(t, \sqrt{r^2+z}) dz$\\
 $\frac{1}{2}\int_0^\infty z^{-1/2}H^{\E}_{n+1}(t, \sqrt{r^2+z}) dz=\frac{1}{2}\frac{\exp{(-r^{2}/4t)}}{(4\pi t)^{(n+1)/2}}\int_0^\infty  z^{-1/2} e^{-z/4t}dz=\frac{1}{2} H^{\E}_n(t, r).$\\
The parts iii), iv) and v) are consequences of i) and ii).
\end{proof}
\begin{prop} Let, for $y\in \R^+$ and $ x, x'\in \R^n$, $ P^{\E}_n(y, x, x')$, be the Poisson kernel on the Euclidean space then we have\\
i) $P^{\E}_n(y, x, x')=\frac{y}{\pi^{(n+1)/2}}\int_0^\infty e^{-(|x-x'|^2+y^2)u}u^{(n-1)/2}du.
$\\
ii) $ P^{\E}_n(y, x, x')=\frac{\Gamma((n+1)/2))}{\pi^{(n+1)/2}}\frac{y}{\left(|x-x'|^2+y^2\right)^{(n+1)/2}}.
$
 \end{prop}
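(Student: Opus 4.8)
The plan is to obtain both formulas as a direct specialization of the subordination identity of Proposition~\ref{1}~i) to the Euclidean Laplacian, using the explicit heat kernel \eqref{Heat-Kernel-e}.

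First I would recall that $P^{\E}_n(y,x,x')$ is by definition the integral kernel of the Poisson semigroup $e^{-y\sqrt{-A_n}}$, whereas the heat semigroup $e^{tA_n}$ has kernel $H^{\E}_n(t,x,x')$. Passing to kernels in Proposition~\ref{1}~i) --- that is, replacing the operator $e^{A_n/4u}$ by the heat kernel evaluated at time $t=1/(4u)$ inside the Laplace transform --- gives
\begin{equation}
P^{\E}_n(y,x,x')=\frac{y}{\sqrt{\pi}}\int_0^\infty e^{-u y^2}\,H^{\E}_n\!\left(\tfrac{1}{4u},x,x'\right)u^{-1/2}\,du.
\end{equation}

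Next I would substitute $t=1/(4u)$ into \eqref{Heat-Kernel-e}. A short computation gives $H^{\E}_n(1/(4u),x,x')=(u/\pi)^{n/2}\exp(-u|x-x'|^2)$; inserting this into the integral, collecting the powers of $u$ together with the factor $\pi^{-n/2}$, yields exactly formula~i). For part~ii) I would then evaluate the resulting Euler integral: with $s=(n+1)/2$ and $a=|x-x'|^2+y^2$ one has $\int_0^\infty e^{-au}u^{s-1}\,du=\Gamma(s)a^{-s}$, which turns i) into the closed form asserted in ii).

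The only genuinely delicate point is the first step, namely the passage from the operator identity of Proposition~\ref{1} to the kernel identity. This requires justifying that the integral kernel of the Laplace transform of the semigroup is obtained by applying the Laplace transform to the heat kernel under the integral sign. The interchange is legitimate because the Gaussian factor $\exp(-u|x-x'|^2)$ together with $e^{-uy^2}$ guarantees absolute convergence, uniformly on compact sets in $(y,x,x')$ with $y>0$, so Fubini's theorem applies; once this is granted, the remaining manipulations are the elementary Gaussian simplification above and a standard Gamma-function evaluation.
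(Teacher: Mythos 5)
Your proof is correct and follows the same route as the paper: part~i) is obtained by applying the subordination identity of Proposition~\ref{1}~i) at the kernel level with the explicit Euclidean heat kernel \eqref{Heat-Kernel-e}, and part~ii) follows from i) by the Gamma-function integral $\int_0^\infty e^{-au}u^{(n-1)/2}\,du=\Gamma((n+1)/2)\,a^{-(n+1)/2}$. The paper states this only in one line, so your write-up simply supplies the details (including the Fubini justification) that the paper leaves implicit.
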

 \begin{proof}
 The part i) is a consequence of i) of the Proposition \ref{1} and the part ii) is is a consequence of i).
 \end{proof}

\begin{prop}
 If $P^{\E}_n(y, r)= P^{\E}_n(y, x, x')$, $r= |x-x'|$,
   be the Euclidean  Poisson kernel, then we have
\begin{itemize}
\item{i)} $ (-\frac{ \partial}{2\pi  r \partial r})P^{\E}_n(y,  r)=P^{E}_{n+2}(y, r).$\\
\item {ii)} $ \int_r^\infty \left(r^2-s^2 \right)^{-1/2} P^{\E}_{n+1}(y, s) 2 s ds=P^{\E}_n(y, r)$.
\item{iii)}For $n$  odd, $P^{\E}_n(y, r)= (-\frac{\partial}{2\pi r\partial r})^{(n-1)/2}P^{\E}_1(y, r).$
\item{iv)}For $ n$   even\\
 $P^{\E}_n(y, r)= \left(-\frac{\partial}{2\pi r \partial r}\right)^{(n-2)/2}\int_r^\infty  \left(s^2- r^2\right)^{-1/2} \frac{1}{\pi^2}\frac{y}{\left( s^2+y^2\right)^{2}} 2sds$.
 \item v)For $ n$   even\\
 $P^{\E}_n(y, r)= \int_r^\infty  \left(s^2- r^2\right)^{-1/2} \left(-\frac{\partial}{2\pi s \partial s}\right)^{(n-2)/2}\frac{1}{\pi^2}\frac{y}{\left( s^2+y^2\right)^{2}} 2sds$.
\end{itemize}
\end{prop}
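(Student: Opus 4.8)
The plan is to mirror the argument used for the Euclidean heat kernel in Proposition~\ref{2}, replacing the Gaussian by the explicit Poisson kernel of the previous proposition. For part i) I would simply differentiate the closed form $P^{\E}_n(y,r)=\frac{\Gamma((n+1)/2)}{\pi^{(n+1)/2}}\frac{y}{(r^2+y^2)^{(n+1)/2}}$: since $\frac{\partial}{\partial r}(r^2+y^2)^{-(n+1)/2}=-(n+1)r(r^2+y^2)^{-(n+3)/2}$, the prefactor $-\frac{1}{2\pi r}$ cancels the $r$ and leaves $\frac{n+1}{2\pi}(r^2+y^2)^{-(n+3)/2}$. The recursion $\Gamma((n+3)/2)=\frac{n+1}{2}\Gamma((n+1)/2)$ then recombines the constants into exactly $P^{\E}_{n+2}(y,r)$. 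Alternatively, differentiating under the integral sign in the representation $P^{\E}_n(y,r)=\frac{y}{\pi^{(n+1)/2}}\int_0^\infty e^{-(r^2+y^2)u}u^{(n-1)/2}\,du$ turns the operator into multiplication by $u$ and raises the exponent $(n-1)/2$ to $(n+1)/2$, giving the same conclusion.

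For part ii) I would carry out the substitution $z=s^2-r^2$, $2s\,ds=dz$, exactly as in the proof of Proposition~\ref{2}, which turns the left-hand side into $\int_0^\infty z^{-1/2}P^{\E}_{n+1}(y,\sqrt{r^2+z})\,dz$. With $a=r^2+y^2$ this is a Beta integral, $\int_0^\infty z^{-1/2}(a+z)^{-(n+2)/2}\,dz=a^{-(n+1)/2}\frac{\Gamma(1/2)\Gamma((n+1)/2)}{\Gamma((n+2)/2)}$; inserting $\Gamma(1/2)=\sqrt{\pi}$ collapses the constants and returns $P^{\E}_n(y,r)$. This Beta-integral evaluation is the one genuinely new computation and is the main point where the Poisson case differs from the heat case, in which the analogous step was a plain Gamma integral thanks to the Gaussian form. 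The hard part is really just this constant bookkeeping; if one prefers to avoid the Beta function altogether, the representation $P^{\E}_n(y,r)=\frac{y}{\pi^{(n+1)/2}}\int_0^\infty e^{-(r^2+y^2)u}u^{(n-1)/2}\,du$ lets one interchange the $s$- and $u$-integrations (Fubini, all integrands positive) and perform the inner $s$-integral as the Gamma integral $\int_0^\infty z^{-1/2}e^{-zu}\,dz=\sqrt{\pi}\,u^{-1/2}$, after which the remaining $u$-integral is exactly $P^{\E}_n(y,r)$.

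Finally, parts iii), iv) and v) are formal consequences of i) and ii), obtained by iteration just as in Proposition~\ref{2}. For $n$ odd, applying i) repeatedly gives $\left(-\frac{\partial}{2\pi r\partial r}\right)^{(n-1)/2}P^{\E}_1=P^{\E}_{1+(n-1)}=P^{\E}_n$, which is iii). For $n$ even I would first observe that $P^{\E}_3(y,s)=\frac{1}{\pi^2}\frac{y}{(s^2+y^2)^2}$ because $\Gamma(2)=1$, so the inner kernel appearing in iv) and v) is precisely $P^{\E}_3$. Then ii) with index $3$ gives $\int_r^\infty(s^2-r^2)^{-1/2}P^{\E}_3(y,s)2s\,ds=P^{\E}_2(y,r)$, and $(n-2)/2$ applications of i) promote $P^{\E}_2$ to $P^{\E}_n$, yielding iv); part v) is the same computation with the derivative moved inside the integral, using i) in the $s$-variable to turn $P^{\E}_3$ into $P^{\E}_{n+1}$ before applying ii). The only subtlety in these last three parts is justifying differentiation under the integral and the interchange of order, which is immediate since all integrands are smooth and absolutely integrable for $y>0$.
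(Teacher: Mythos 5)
Your proposal is correct and follows essentially the same route as the paper: direct differentiation of the closed form for i), the substitution $z=s^2-r^2$ followed by the Beta integral $\int_0^\infty z^{-1/2}(a+z)^{-(n+2)/2}dz = a^{-(n+1)/2}B(1/2,(n+1)/2)$ for ii) (the paper cites exactly this formula from Magnus et al.), and iteration of i) and ii) for parts iii)--v). You actually supply more detail than the paper, which leaves i) to the reader and states iii)--v) as immediate consequences; your identification of the inner kernel as $P^{\E}_3$ and the bookkeeping for iv) and v) fill in those gaps correctly.
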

\begin{proof}
This Proposition can be proved using Proposition \ref{1} and Proposition \ref{2}, but we can also give a direct proof:
i)is simple and is left to the reader. For
ii) we have\\
 $\int_r^\infty \left(r^2-s^2 \right)^{-1/2} P^{\E}_{n+1}(y,  s) 2 s ds=
 \int_r^\infty \left(r^2-s^2 \right)^{-1/2}\frac{\Gamma((n+2)/2)}{\pi^{(n+2)/2}}
\frac{y}{(s^2+y^2)^{(n+2)/2}} 2s ds,$\\
by setting $s^2-r^2=z$ in the above integral we can write it as:\\
$\frac{\Gamma((n+2)/2)}{\pi^{(n+2)/2}}y\int_0^\infty z^{-1/2}
(z+r^2+y^2)^{-(n+2)/2} dz$,
using the formula (Magnus et al. \cite{Magnus et al.} p.8):
$B(x, y)=b^x\int_0^\infty t^{x-1}(1+bt)^{-x-y}dt,$
with $x=1/2, y=(n+1)/2$, we obtain the formula ii).
The parts iii), iv) and v) are consequence of i) and ii).
\end{proof}
\begin{thm} For $t\in \R^+$ and $x, x'\in \R^n$, the following formulas hold for the Euclidean heat kernel $H^E_n(t, x, x')$
\begin{align}\label{pre-gruet-e} H^{\E}_n(t, x, x')=\frac{\Gamma((n+1)/2)}{2 i\pi^{n/2+1}(4t)
^{1/2}}\int_{\sigma-i\infty}^{\sigma+i\infty}\frac{e^{\frac{s}{4 t}} }{(|x-x'|^2+s)^{(n+1)/2}} ds
\end{align}
and
\begin{align}\label{gruet-e} H^{\E}_n(t, x, x
')=\frac{\Gamma((n+1)/2)}{\pi^{n/2+1
}(4t)^{1/2}}\int_0^\infty {\cal R} e\left[\frac{e^{(a-i \xi)/4 t}}{(|x-x'|^2+a-i\xi)^{(n+1)/2}}\right]d\xi.
\end{align}
\end{thm}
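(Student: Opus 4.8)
The plan is to realize both displayed formulas as consequences of the subordination identity Proposition \ref{1}(ii) together with the explicit expression for the Euclidean Poisson kernel. Since $P^{\E}_n(y,x,x')$ is by definition the integral kernel of the Poisson semigroup $e^{-y\sqrt{-A_n}}$, applying Proposition \ref{1}(ii) at the level of kernels gives
\begin{equation*}
H^{\E}_n(t,x,x')=(4t)^{-1/2}L^{-1}_{y^2}\!\left[\frac{\sqrt{\pi}\,P^{\E}_n(y,x,x')}{y}\right]\!\left(\tfrac{1}{4t}\right).
\end{equation*}
First I would substitute the closed form $P^{\E}_n(y,x,x')=\frac{\Gamma((n+1)/2)}{\pi^{(n+1)/2}}\frac{y}{(|x-x'|^2+y^2)^{(n+1)/2}}$ obtained earlier. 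The numerator $y$ cancels the $1/y$, so that $\sqrt{\pi}\,P^{\E}_n/y$ depends on $y$ only through $v=y^2$, namely $F(v)=\Gamma((n+1)/2)\,\pi^{-n/2}(|x-x'|^2+v)^{-(n+1)/2}$. This cancellation is the crucial simplification that turns the inverse Laplace transform in the variable $y^2$ into a transform of an elementary function of $v$.

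Next I would write the inverse Laplace transform as a Bromwich integral, $L^{-1}_v[F(v)](u)=\frac{1}{2\pi i}\int_{\sigma-i\infty}^{\sigma+i\infty}e^{uv}F(v)\,dv$, evaluated at $u=1/(4t)$, with the abscissa $\sigma$ chosen to the right of the branch point $v=-|x-x'|^2$. Collecting the constants, $(4t)^{-1/2}\cdot\frac{1}{2\pi i}\cdot\Gamma((n+1)/2)\pi^{-n/2}=\frac{\Gamma((n+1)/2)}{2i\pi^{n/2+1}(4t)^{1/2}}$, and renaming $v$ as $s$, one obtains \eqref{pre-gruet-e} directly.

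To pass from \eqref{pre-gruet-e} to \eqref{gruet-e} I would parametrize the vertical contour by $s=a+i\xi$ with $a=\sigma$, so that $ds=i\,d\xi$ and the prefactor $\frac{1}{2i}$ becomes the real factor $\frac12$ once the $i$ coming from $ds$ cancels the $i$ in the denominator. Writing $f(\xi)=e^{(a+i\xi)/4t}(|x-x'|^2+a+i\xi)^{-(n+1)/2}$ for the integrand and using the principal branch of the power, one checks that $f(-\xi)=\overline{f(\xi)}$ because $a$, $t$ and $|x-x'|^2$ are real. Folding the integral over $(-\infty,\infty)$ into twice the integral over $(0,\infty)$ then gives $\int_{-\infty}^{\infty}f(\xi)\,d\xi=\int_0^\infty\bigl(f(\xi)+\overline{f(\xi)}\bigr)\,d\xi=2\int_0^\infty{\cal R}e\,f(\xi)\,d\xi$, and the factor $2$ cancels the $\frac12$, producing \eqref{gruet-e} (the sign of $i\xi$ inside the real part is immaterial since $\mathrm{Re}\,\overline{f(\xi)}=\mathrm{Re}\,f(\xi)$).

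The main obstacle will be the analytic justification of these formal contour manipulations rather than the algebra. Two points require care. One must verify that the Bromwich integral converges: on the line $\mathrm{Re}\,s=a$ the modulus of the integrand decays only like $|\xi|^{-(n+1)/2}$, which is absolutely integrable for $n\ge 2$ but for $n=1$ is only conditionally convergent, so in that borderline case one must exploit the oscillation of $e^{i\xi/4t}$ through a Dirichlet-type argument. One must also keep the contour strictly to the right of the branch point $s=-|x-x'|^2$, so that the single-valued principal branch of $(|x-x'|^2+s)^{(n+1)/2}$ underlying the conjugation symmetry $f(-\xi)=\overline{f(\xi)}$ is well defined along the entire path.
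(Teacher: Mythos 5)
Your proposal is correct and follows essentially the same route as the paper: apply Proposition \ref{1}(ii) at the kernel level with the closed-form Poisson kernel of Proposition 2.2(ii) to obtain the Bromwich representation \eqref{pre-gruet-e}, then parametrize the vertical contour and fold it into a real-part integral over $(0,\infty)$ to get \eqref{gruet-e}. Your additional attention to convergence on the contour and to the branch point $s=-|x-x'|^2$ supplies analytic detail the paper leaves implicit, but the underlying argument is the same.
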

\begin{proof} The proof of \eqref{pre-gruet-e} uses essentially ii) of Proposition \ref{1} and the formula ii) of the Proposition 2.2.
Set $y=\sigma+i\xi$ in the formula \eqref{pre-gruet-e} to get, after
splitting the integral into two integrals over $\R^+$ and $\R^-$, the formula \eqref{gruet-e}.
Notice that the formula \eqref{gruet-e} is the Euclidean analogous of the Gruet formula on the hyperbolic space (see Gruet \cite{Gruet}) and we call it
the Euclidean Gruet formula.
\end{proof}
The integrand on the right hand side of \eqref{pre-gruet-e}is
a meromorphic function in s when n is odd and we can apply the residue
calculus.
For $n=2k+1$,   $\forall t>0, x, x'\in \R^n$, $r=|x-x'|$, we obtain
$$H_{2k+1}^{\E}(t, r)=\left(\frac{\partial}{-2\pi r\partial r}\right)^k \frac{e^{-r^2/4t}}{\sqrt{4\pi t}}.$$

\section{Heat and Poisson semigroups on the spherical space}
We recall some facts about the heat kernel on spherical spaces see (Nowak et al.\cite{Nowak et al.} and Camporesi \cite{Camporesi}):
 For $t\in \R^+$ and $\omega, \omega'\in \S^n$,
let $H^{S}_n(t, \varphi(\omega, \omega'))= H_n^{S}(t, \omega, \omega')$  be the  spherical  heat kernel, then we have
\begin{itemize}
\item i) $ (-\frac{\partial}{2\pi\sin \varphi \partial \varphi})H^{S}_n(t, \varphi)=H^{\R}_{n+2}(t, \varphi).$

\item{ii)}For $n$  odd, $H^{S}_n(t, \varphi)= (-\frac{\partial}{2\pi\sin \varphi\partial \varphi})^{(n-1)/2}H_1^{S}(t, \varphi),$
where \begin{equation}H_1^{S}(t, \varphi)=(4\pi t)^{-1/2}\sum_{n\in \Z}e^{-(\varphi+2n\pi)^2/4t}\end{equation}
\item{iii)}For $n$ even, $H^{S}_n(t, \varphi)= (-\frac{\partial}{2\pi\sin \varphi\partial \varphi})^{(n-2)/2}H_2^{S}(t, \varphi).$
where \begin{equation}H_2^{S}(t, \varphi)=(4\pi t)^{-3/2}\sum_{n\in \Z}(-1)^n \int_{\varphi}^\pi \frac{e^{-(\psi+2n\pi)^2/4t}(\psi+2\pi n)}{(\sin^2 \psi/2-\sin^2 \varphi/2)^{1/2}}d\psi.
\end{equation}
.\end{itemize}
Recall that the Poisson kernel on the sphere $S^n$ is given by Adam et al.\cite{Adam et al.} see also Taylor \cite{Taylor} formula(4.9) p.114.
\begin{equation} P_n^{S}\left(y, \omega, \omega'\right)=\frac{\Gamma((n+1)/2)}{\pi^{(n+1)/2}}\frac{\sinh y}
 {\left(2\cosh y-2\cos d(\omega, \omega')\right)^{(n+1)/2}}.\end{equation}
\begin{prop}  For $t\in \R^+$ and $\omega, \omega'\in \S^n$,
let $P^{S}_n(y, \varphi(\omega, \omega'))= P_n^{S}(y, \omega, \omega')$  be the  spherical  Poisson kernel, then we have
\begin{itemize}
\item i) $ (-\frac{\partial}{2\pi\sin \varphi \partial \varphi})P^{S}_n(y, \varphi)=P^{S
}_{n+2}(y, \varphi).$
\item ii)$P^{S}_n(y, \varphi)= c_n\int_{-1}^{1}\cosh(y/2)
P^{S}_{2n+1}(y/2, \left(\arccos(v \cos
\varphi/2)\right))(1-v^2)^{(n-1)/2}dv$.
\item iii) $P^{S}_n(y, \varphi)=
 \frac{c_n}{2}\int_{\varphi}^{2\pi-\varphi}\frac{\cosh y/2}{\cos\varphi/2}
 \left(1-\frac{\cos^2 \varphi/2}{\cos^2\psi/2}\right)^{(n-1)/2}P^{S}_{2n+1}(y/2, \psi/2)\ sin(\psi/2) d\psi
    $,\\
    with $c_n=\frac{\pi^{(n+1)/2}}{2^{n-1}\Gamma((n+1)/2)}$
\item iv)For $n$  odd, $P^{S}_n(y, \varphi)= (-\frac{\partial}{2\pi\sin \varphi\partial \varphi})^{(n-1)/2}P_1^{S}(y, \varphi).$
\item v)For $n$ even, $P^{S}_n(y, \varphi)= (-\frac{\partial}{2\pi\sin \varphi\partial \varphi})^{(n-2)/2}P_2^{S}(y, \varphi).$
\end{itemize}
\end{prop}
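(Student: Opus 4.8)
The plan is to reduce everything to the closed form of the spherical Poisson kernel,
$$P_n^S(y,\varphi)=\frac{\Gamma((n+1)/2)}{\pi^{(n+1)/2}}\frac{\sinh y}{(2\cosh y-2\cos\varphi)^{(n+1)/2}},$$
together with the double-angle identities $2\cosh y=(2\cosh(y/2))^2-2$ and $2\cos\varphi=(2\cos(\varphi/2))^2-2$, which give the key algebraic fact $2\cosh y-2\cos\varphi=(2\cosh(y/2))^2-(2\cos(\varphi/2))^2$. Throughout I abbreviate $A=2\cosh(y/2)$ and $B=2\cos(\varphi/2)$, so that this quantity is $A^2-B^2$.

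For i) I would differentiate the closed form in $\varphi$. Since $\frac{\partial}{\partial\varphi}(2\cosh y-2\cos\varphi)=2\sin\varphi$, the chain rule produces a factor $\sin\varphi$ that is exactly cancelled by the $1/\sin\varphi$ in the operator, while the surviving constant $\frac{n+1}{2}\Gamma((n+1)/2)=\Gamma((n+3)/2)$ raises the dimension from $n$ to $n+2$; this is the verbatim analogue of part i) of Proposition \ref{2}, with $\sin\varphi$ in the role of $r$. Parts iv) and v) are then immediate consequences of i): iterating the operator $(n-1)/2$ times for $n$ odd (resp. $(n-2)/2$ times for $n$ even) reduces $P_n^S$ to $P_1^S$ (resp. $P_2^S$), which are nothing but the $n=1$ and $n=2$ cases of the closed form.

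The substantive step is ii). I would substitute the explicit kernel $P_{2n+1}^S(y/2,\theta)=\frac{n!}{\pi^{n+1}}\sinh(y/2)(2\cosh(y/2)-2\cos\theta)^{-(n+1)}$ with $\theta=\arccos(v\cos(\varphi/2))$, so that $\cos\theta=v\cos(\varphi/2)$ and the denominator becomes $(A-Bv)^{n+1}$. Pulling out the $v$-independent factors and using $\cosh(y/2)\sinh(y/2)=\tfrac12\sinh y$, the identity reduces to the single Beta-type integral
$$\int_{-1}^{1}\frac{(1-v^2)^{(n-1)/2}}{(A-Bv)^{n+1}}\,dv=2^{n}\,B\!\left(\tfrac{n+1}{2},\tfrac{n+1}{2}\right)(A^2-B^2)^{-(n+1)/2}.$$
I would establish this by the Möbius substitution $t=(1+v)/(1-v)$, which carries $(-1,1)$ onto $(0,\infty)$; after the $(1+t)$ powers cancel it puts the integral into the exact form of the Beta integral $B(x,y)=b^{x}\int_0^\infty t^{x-1}(1+bt)^{-x-y}\,dt$ of Magnus et al. \cite{Magnus et al.} with $x=y=(n+1)/2$ and $b=(A-B)/(A+B)$, after which $(A+B)^{-(n+1)}b^{-(n+1)/2}$ collapses to $(A^2-B^2)^{-(n+1)/2}$. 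Feeding this back, the constants telescope: since $B(\tfrac{n+1}{2},\tfrac{n+1}{2})=\Gamma((n+1)/2)^2/n!$ the factor $n!$ cancels, and $c_n$ was chosen precisely so that the remaining constant is the normalization $\Gamma((n+1)/2)/\pi^{(n+1)/2}$ of $P_n^S$. I expect this constant bookkeeping, together with keeping track of which normalization of the Beta function one uses (the two being reconciled by the Legendre duplication formula $\Gamma(\tfrac{n+1}{2})\Gamma(\tfrac{n}{2}+1)=2^{-n}\sqrt{\pi}\,n!$), to be the point most in need of care.

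Finally, iii) is ii) rewritten in the geodesic variable. I would substitute $\cos(\psi/2)=v\cos(\varphi/2)$, i.e. $v=\cos(\psi/2)/\cos(\varphi/2)$, so that the argument $\arccos(v\cos(\varphi/2))$ becomes $\psi/2$; then $dv=-\frac{\sin(\psi/2)}{2\cos(\varphi/2)}\,d\psi$, and the endpoints $v=-1,1$ correspond to $\psi=2\pi-\varphi,\varphi$, so reversing the limits absorbs the sign and yields the prefactor $\frac{\cosh(y/2)}{\cos(\varphi/2)}$, the weight $(1-\cos^2(\psi/2)/\cos^2(\varphi/2))^{(n-1)/2}$ coming from $(1-v^2)^{(n-1)/2}$, and the measure $\tfrac12\sin(\psi/2)\,d\psi$; this is the content of iii).
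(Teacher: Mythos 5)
Your proof is correct, and for the central identity ii) it takes a genuinely different route from the paper's. The paper also starts from the closed form of $P^S_{2n+1}$, but after the affine substitution $v=1-2\xi$ it recognizes the resulting integral as the Euler integral representation of the hypergeometric function ${}_2F_1\left(n+1,\tfrac{n+1}{2};n+1;z\right)$ with $z=\frac{-2\cos\varphi/2}{\cosh y/2-\cos \varphi/2}$ (Magnus et al., p.~54), and then collapses it via the degenerate identity ${}_2F_1(\alpha,\beta;\alpha;z)=(1-z)^{-\beta}$ (p.~38); the identity $\cosh^2 (y/2)-\cos^2(\varphi/2)=\tfrac12(\cosh y-\cos\varphi)$ then yields the final denominator. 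You instead use the M\"obius substitution $t=(1+v)/(1-v)$ and evaluate the integral directly by the Beta formula $B(x,y)=b^x\int_0^\infty t^{x-1}(1+bt)^{-x-y}dt$ with $x=y=\tfrac{n+1}{2}$ and $b=(A-B)/(A+B)$ --- which, incidentally, is exactly the formula (Magnus et al., p.~8) that the paper itself invokes for the analogous Euclidean statement in Proposition 2.3 ii). The two computations are equivalent (the degenerate ${}_2F_1$ identity is precisely what hides the Beta function in the paper's version), but yours avoids hypergeometric functions altogether and makes the constant bookkeeping --- hence the origin of $c_n=\pi^{(n+1)/2}/\bigl(2^{n-1}\Gamma((n+1)/2)\bigr)$ --- fully transparent; I checked your stated Beta evaluation and the resulting constants, and they are right. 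A further difference: the paper's printed proof stops after ii) (it declares i) simple and proves only ii)), whereas you also supply the arguments for iii) (the change of variable $\cos(\psi/2)=v\cos(\varphi/2)$ with endpoints $\psi=\varphi$ and $\psi=2\pi-\varphi$, which is correct) and for iv), v) by iterating i), so your write-up is in fact more complete than the paper's.
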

\begin{proof} i) is simple, to prove the part ii), set\\
$I=c_n\int_{-1}^{1} \cosh y/2P^{S}_{2n+1}(y/2, \left(\arccos(v \cos
\varphi/2)\right)) (1-v^2)^{(n-1)/2}dv.\\
I=c_n\frac{\Gamma(n+1)}{(2\pi)^{n+1}}\sinh(y/2)\int_{-1}^{1}(\cosh y/2-v\cos \varphi/2)^{-(n+1)}(1-v^2)^{(n-1)/2}dv.$\\
 Setting $v=1-2\xi$, in the last integral we have\\
 $I=c_n\frac{\Gamma(n+1)}{2\pi^{n+1}}\sinh (y/2)(\cosh y/2-\cos \varphi/2)^{-(n+1)}\times\\
 \int_0^1(1+2\xi\frac{\cos\varphi/2}{\cosh y/2-\cos \varphi/2})^{-(n+1)}[\xi(1-\xi)]^{(n-1)/2} d\xi.$\\
 Using the formula (Magnus et al. \cite{Magnus et al.}, p. 54).
\begin{equation}
 {}_2F_1\left(\alpha, \beta, \gamma, z\right)=\frac{\Gamma(\gamma)}{\Gamma(\beta)\Gamma(\gamma-\beta)} \int_0^1 u^{\beta-1} (1-u)^{\gamma-\beta-1} \left(1-u z\right)^{-\alpha}
du, \end{equation}
with
$Re \gamma> Re \beta >0$ and $\arg(1-z)<\pi$, we obtain\\
 $I=c_n\frac{[\Gamma(n+1)]^2}{2\pi^{n+1}}\sinh (y/2)(\cosh y/2-\cos \varphi/2)^{-(n+1)}\times\\ F(n+1,(n+1)/2, n+1, \frac{-2\cos\varphi/2}{\cosh y/2-\cos \varphi/2}).$\\
 Now
  using the formula (Magnus et al. \cite{Magnus et al.}, p. 38).  ${}_2F_1\left(\alpha, \beta, \alpha, z\right)=(1-z)^{-\beta}$ the proof of ii) is finished.
\end{proof}
\begin{thm}\label{Heat-Spheric} For every $n\geq 2, t>0, \omega, \omega'\in \S^n$, it holds that:

 \begin{equation}\label{pre-gruet-s}H^S_n(t, w, w')=c_n(4t)^{-1/2} \int_{\sigma-i\infty}^{\sigma+i\infty}\frac{\exp{\left(\frac{y^2}{4t}\right)}\, \sinh y}{(\cosh y-\cos \varphi)^{(n+1)/2}}dy,\end{equation}
\begin{align} \label{gruet-s}H_n^S(t, \varphi)=2c_n(4t)^{-1/2}\int_0^{+\infty}{\cal R}e\left[\frac{e^{(\sigma-i\xi)^2/4 t} \sinh(\sigma-i\xi)}{(\cosh (\sigma-i\xi)-\cos\varphi)^{(n+1)/2}}\right] d\xi,
\end{align}
where $c_n=\frac{\Gamma((n+1)/2)}{2^{(n+3)/2}i\pi^{(n/2+1)}}$.
\end{thm}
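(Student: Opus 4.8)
The plan is to follow verbatim the strategy used for the Euclidean Gruet formula \eqref{pre-gruet-e}--\eqref{gruet-e}, replacing the Euclidean Poisson kernel by its spherical counterpart. The starting point is the operator identity ii) of Proposition \ref{1}, read at the level of integral kernels: since $P_n^S(y,\varphi)$ is the kernel of the Poisson semigroup $e^{-y\sqrt{-A_n}}$ and $H_n^S(t,\varphi)$ is the kernel of the heat semigroup $e^{tA_n}$ on $\S^n$, the same identity gives
\begin{equation}\label{kernel-sub}
H_n^S(t,\varphi)=(4t)^{-1/2}\,L_{y^2}^{-1}\!\left[\frac{\sqrt{\pi}\,P_n^S(y,\varphi)}{y}\right]\!\left(\tfrac{1}{4t}\right).
\end{equation}
Thus everything reduces to computing a single inverse Laplace transform of the explicit spherical Poisson kernel recalled above.

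Next I would write the inverse transform in \eqref{kernel-sub} as a Bromwich integral $L_{v}^{-1}[F(v)](u)=\frac{1}{2\pi i}\int_{\sigma-i\infty}^{\sigma+i\infty} e^{uv}F(v)\,dv$, with $u=1/4t$ and $F(v)=\sqrt{\pi}\,P_n^S(\sqrt{v},\varphi)/\sqrt{v}$. The change of variable $v=y^2$, $dv=2y\,dy$, cancels the factor $1/y$ carried by $\sqrt{\pi}P_n^S(y,\varphi)/y$, turns $e^{uv}$ into $e^{y^2/4t}$, and replaces the vertical line in the $v$-plane by a contour in the $y$-plane which, by the decay established below, can be deformed onto the vertical line $\mathrm{Re}\,y=\sigma$. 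Inserting
\begin{equation*}
P_n^S(y,\varphi)=\frac{\Gamma((n+1)/2)}{\pi^{(n+1)/2}}\,\frac{\sinh y}{\bigl(2\cosh y-2\cos\varphi\bigr)^{(n+1)/2}},
\end{equation*}
extracting the factor $2^{(n+1)/2}$ from the denominator, and collecting all numerical constants into $c_n$ yields \eqref{pre-gruet-s}.

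To pass from \eqref{pre-gruet-s} to \eqref{gruet-s} I would parametrise the contour by $y=\sigma+i\xi$, $\xi\in\R$, so that $dy=i\,d\xi$, and split the integral into the ranges $\xi>0$ and $\xi<0$. Writing the integrand as $f(y)$, the three building blocks $e^{y^2/4t}$, $\sinh y$ and $(\cosh y-\cos\varphi)^{(n+1)/2}$ all have real Taylor coefficients, hence $f(\bar y)=\overline{f(y)}$; since $\overline{\sigma+i\xi}=\sigma-i\xi$, the $\xi<0$ part is the complex conjugate of the $\xi>0$ part. Adding the two therefore produces twice the real part, which is exactly the structure of \eqref{gruet-s}.

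The main obstacle is analytic rather than algebraic: one must justify the contour manipulations. The singularities of $(\cosh y-\cos\varphi)^{-(n+1)/2}$ sit on the imaginary axis at $y=\pm i\varphi+2\pi i k$, so any $\sigma>0$ places the contour strictly to the right of them and of the branch cuts they carry. On $\mathrm{Re}\,y=\sigma$ one has $\mathrm{Re}(y^2)=\sigma^2-\xi^2\to-\infty$, giving the integrand the Gaussian decay $|e^{y^2/4t}|=e^{(\sigma^2-\xi^2)/4t}$, while $\sinh y$ stays bounded and the denominator stays bounded away from $0$ for $\sigma\neq 0$; this secures absolute convergence and legitimates both the deformation of the $v=y^2$ image curve onto the vertical line and the splitting used for \eqref{gruet-s}. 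Checking that the denominator never vanishes on the contour, equivalently that $\cosh(\sigma+i\xi)=\cosh\sigma\cos\xi+i\sinh\sigma\sin\xi$ stays off the real segment $[-1,1]$, is the one point requiring a short separate verification.
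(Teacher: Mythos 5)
Your proposal follows essentially the same route as the paper's (very terse) proof: apply ii) of Proposition \ref{1} at the kernel level with the explicit spherical Poisson kernel to obtain \eqref{pre-gruet-s} as a Bromwich integral, then set $y=\sigma+i\xi$ and split the integral over $\xi>0$ and $\xi<0$ to obtain \eqref{gruet-s}. The extra material you supply --- the change of variable $v=y^2$ in the inverse Laplace transform, the Schwarz-reflection argument $f(\bar y)=\overline{f(y)}$ that turns the two half-line integrals into twice a real part, and the verification that the branch points $y=\pm i\varphi+2\pi i k$ lie on the imaginary axis so the contour with $\sigma>0$ is legitimate --- is exactly the detail the paper leaves implicit, so your write-up is a correct filled-in version of the same argument rather than a different one.
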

 \begin{proof} The proof of \eqref{pre-gruet-s} uses essentially ii) of Proposition \ref{1}. To see \eqref{gruet-s}
Set $y=\sigma+i\xi$ in the formula \eqref{pre-gruet-s} and
split the integral into two integrals over $\R^+$ and $\R^-$.
Notice that the formula \eqref{gruet-s} is the spherical analogous of the Gruet formula on the hyperbolic space Gruet \cite{Gruet} and we call it
the spherical Gruet formula.
\end{proof}
The integrand on the right hand side of \eqref{pre-gruet-s}is
a meromorphic function in $s$ when $n$ is odd and we can apply the residue
calculus to obtain,
for n=2k+1,
$$H_{2k+1}^{\S}(t, x, x')=
\left(\frac{\partial}{-2\pi \sin r\partial r}\right)^k \sum_{n\in \Z}\frac{e^{-(\varphi+2\pi n)^2/4t}}{\sqrt{4\pi t}}.$$


\section{Heat and Poisson semigroups on the hyperbolic spaces $\H^n$}

\begin{prop} (Grigoryan-Noguchi \cite{Grigoryan-Noguchi}, Davies-Mandouvalos \cite{Davies-Mandouvalos}) For every $n\neq 2, t>0, w, w'\in \H^n$, it holds that:
Let $H^{H}_n(t, \rho)= H_n^{H}(t, w, w')$  be the real hyperbolic  heat kernel, then we have
\begin{itemize}
\item i) $ (-\frac{\partial}{2\pi\sinh \rho \partial \rho})H^{H}_n(t,  \rho)=H^{H}_{n+2}(t, \rho).$
\item ii) $ \int_{\rho} ^\infty \left(\cosh^2s/2-\cosh^2 \rho/2\right)^{-1/2}H^{H}_{n+1}(t, s)\ sinh s ds= H^{H}_n(t, \rho)$.
\item iii) For $n$  odd, $H^{H}_n(t, \rho)= (-\frac{\partial}{2\pi\sinh \rho\partial \rho})^{(n-1)/2}\left[\frac{e^{-\rho^2/4 t}}{(4 \pi t)^{1 /2}}\right].$
\item iv)For $ n$   even\\
 $H^H_n(t, \rho)= \left(-\frac{\partial}{2\pi\sinh \rho \partial \rho}\right)^{(n-2)/2}\int_r^\infty  \left(\cosh^2s/2-\cosh^2 \rho/2\right)^{-1/2} \frac{e^{-s^2/4 t}}{(4 \pi t)^{3/2}} sds$.
 \item v)For $ n$   even\\
 $H^H_n(t, \rho)= \int_{\rho}^\infty\left(\cosh^2s/2-\cosh^2 \rho/2\right)^{-1/2} \left(-\frac{\partial}{2\pi\sinh s \partial s}\right)^{(n-2)/2}\frac{e^{-s^2/4 t}}{(4 \pi t)^{3/2}} sds$.
\end{itemize}
\end{prop}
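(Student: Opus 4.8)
The plan is to establish this as the exact hyperbolic counterpart of Proposition \ref{2}, with the Euclidean radial variable $r$ and weight $r\,dr$ replaced by the geodesic distance $\rho$ and the hyperbolic weight $\sinh\rho\,d\rho$. As in the flat case, the whole statement rests on the two ladder identities i) and ii); once these are in hand, parts iii), iv), v) follow by iteration and reindexing. For iii) I would iterate i) a total of $(n-1)/2$ times starting from the odd base case $H_1^{H}(t,\rho)=(4\pi t)^{-1/2}e^{-\rho^2/4t}$, which is legitimate because $\H^1=\R$ carries the flat Laplacian and hence the one-dimensional Gaussian as its heat kernel. For iv) (even $n\ge 4$) I would first produce the $\H^2$ building block by applying ii) with index $n=2$: since i) gives $H_3^{H}(t,s)=(4\pi t)^{-3/2}\frac{s}{\sinh s}e^{-s^2/4t}$, one has $H_3^{H}(t,s)\sinh s=(4\pi t)^{-3/2}s\,e^{-s^2/4t}$, so ii) becomes exactly the stated integral with the $(4\pi t)^{-3/2}$ Gaussian inside, and iterating i) a further $(n-2)/2$ times climbs to general even $n$. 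Part v) is then obtained from iv) by commuting the operator $\left(-\frac{\partial}{2\pi\sinh\rho\,\partial\rho}\right)$ past the Abel integral, which is permissible because it differentiates in the outer variable $\rho$ while the integration is in $s$; one only checks that differentiating the kernel $(\cosh^2 s/2-\cosh^2\rho/2)^{-1/2}$ in $\rho$ reproduces the $s$-derivative form after an integration by parts.

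For the differential ladder i), the cleanest route is the intertwining of the radial Laplace--Beltrami operators $L_n=\partial_\rho^2+(n-1)\coth\rho\,\partial_\rho$. I would verify, by a direct computation with $\coth$ and $\operatorname{csch}$, the operator identity
\[
D\Bigl(L_n+\bigl(\tfrac{n-1}{2}\bigr)^2\Bigr)=\Bigl(L_{n+2}+\bigl(\tfrac{n+1}{2}\bigr)^2\Bigr)D,\qquad D=-\frac{1}{2\pi\sinh\rho}\frac{\partial}{\partial\rho}.
\]
Granting this, $DH_n^{H}$ solves the same ground-state-shifted radial heat equation in dimension $n+2$ as $H_{n+2}^{H}$ does; matching the short-time initial data (the $\delta$-normalisation calibrated precisely by the constant $\tfrac{1}{2\pi}$) and invoking uniqueness of the radial fundamental solution then forces $DH_n^{H}=H_{n+2}^{H}$, which is i). An alternative, more in the computational spirit of Proposition \ref{2}, is to verify i) directly on the explicit odd-dimensional kernels and to pass to even dimensions through ii).

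For the integral ladder ii), I would run the same substitution used in the Euclidean proof, namely $z=\cosh^2(s/2)-\cosh^2(\rho/2)$, for which $\sinh s\,ds=2\,dz$ and the weight $(\cosh^2 s/2-\cosh^2\rho/2)^{-1/2}$ becomes $z^{-1/2}$. The left-hand side of ii) thereby turns into the half-order Abel (Weyl) transform $2\int_0^\infty z^{-1/2}H_{n+1}^{H}(t,s(z))\,dz$, with $s(z)$ defined by $\cosh^2(s/2)=z+\cosh^2(\rho/2)$. Unlike the Euclidean case, where the Gaussian factored and the $z$-integral collapsed to $\Gamma(1/2)$, here the identification of this Abel transform with the descent-by-one-dimension map between the kernels of $\H^{n+1}$ and $\H^{n}$ is the genuine content; I would establish it either from a common integral representation of the two kernels (as in Davies--Mandouvalos) or, for odd $n+1$ where $H_{n+1}^{H}$ is explicit, by direct evaluation, propagating the rest through i).

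The step I expect to be the main obstacle is ii): the differential ladder i) is the standard hyperbolic intertwining and is essentially mechanical once the operator identity above is written down, whereas ii) is a true method-of-descent statement that does \emph{not} reduce to an elementary Gamma integral, precisely because the even- and higher-dimensional kernels have no elementary closed form. A secondary but unavoidable piece of bookkeeping is the spectral normalisation: all these identities hold for the ground-state-shifted kernels, equivalently for the semigroup generated by $\Delta_n^{H}+(\tfrac{n-1}{2})^2$, and one must carry this shift consistently so that the short-time constants in i) and in the base cases of iii)--v) match.
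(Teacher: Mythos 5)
Your proposal is correct in outline, but be aware that the paper does not prove this proposition at all: unlike its Euclidean counterpart (Proposition \ref{2}), it is imported verbatim from the literature, with the proof deferred to the cited references of Grigor'yan--Noguchi and Davies--Mandouvalos. What you propose is essentially a reconstruction of that literature proof, and it is sound: your intertwining identity is correct --- writing $D_0=\frac{1}{\sinh\rho}\partial_\rho$, a direct computation gives $D_0L_n=(L_{n+2}+n)D_0$, and $n+\bigl(\frac{n-1}{2}\bigr)^2=\bigl(\frac{n+1}{2}\bigr)^2$, which is exactly your shifted relation --- and the substitution $z=\cosh^2(s/2)-\cosh^2(\rho/2)$ shows that $\frac{1}{\sinh\rho}\partial_\rho$ commutes with the Abel integral (at fixed $z$ one has $\partial s/\partial\rho=\sinh\rho/\sinh s$), which both derives v) from iv) and lets you propagate ii) up the ladder from the one explicitly checkable case, McKean's formula relating the explicit $\H^3$ kernel to the $\H^2$ one. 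What your route buys over the paper's bare citation is precisely the point you flag at the end: as written, with no factors $e^{-nt}$, the identities are literally true only for the ground-state-shifted kernels, i.e.\ for the semigroup generated by $\Delta^H_n+\bigl(\frac{n-1}{2}\bigr)^2$; for the genuine Laplace--Beltrami heat kernel the recursion of Grigor'yan--Noguchi reads $H^H_{n+2}(t,\rho)=-\frac{e^{-nt}}{2\pi\sinh\rho}\partial_\rho H^H_n(t,\rho)$, and the paper never states which normalization it intends. The remaining soft spots in your sketch --- the general-$n$ case of ii) and the short-time initial-data matching needed to invoke uniqueness of the radial fundamental solution --- are real but standard, and the two routes you name (the Davies--Mandouvalos integral representation, or the odd-dimensional base case combined with the commutation above) do close them.
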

\begin{prop} Let  $P_n^H(y, w, w')$ the Poisson kernel on the hyperbolic space given in Taylor \cite{Taylor} and Adam et al. \cite{Adam et al.}
\begin{align} P_n^H(y, w, w')=\frac{\Gamma((n+1)/2)}{(2\pi)^{(n+1)/2}}\frac{\sin y}{\left(\cosh \rho(w, w')-\cos y\right)^{(n+1)/2}}
  \end{align} and
set $P^H_n(y, \rho)= P^H_n(y, w, w')$  $\rho=d(w, w')$, then we have
Let $H^{H}_n(t, \rho)= H_n^{H}(t, w, w')$  be the real hyperbolic  heat kernel, then we have
\begin{itemize}
\item i) $ (-\frac{\partial}{2\pi\sinh \rho \partial \rho})P^{H}_n(t,  \rho)=P^{H}_{n+2}(t, \rho).$
\item ii) $ \int_{\rho} ^\infty \left(\cosh^2s/2-\cosh^2 \rho/2\right)^{-1/2}P^{H}_{n+1}(t, s)\ sinh s ds= P^{H}_n(t, \rho)$.
\end{itemize}
\end{prop}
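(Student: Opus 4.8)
The plan is to prove the two identities directly from the closed form of $P^H_n(y,\rho)$, exactly as the corresponding Euclidean statements were obtained earlier: part i) is a one-line differentiation, and part ii) is a change of variables that reduces the integral to the Beta-function formula already cited from Magnus et al. Throughout I would abbreviate $P^H_n(y,\rho)=C_n\,\sin y\,(\cosh\rho-\cos y)^{-(n+1)/2}$ with $C_n=\Gamma((n+1)/2)/(2\pi)^{(n+1)/2}$, keeping in mind that the relevant variable for the recurrences is the radial variable $\rho$.

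For i), I would differentiate $P^H_n$ in $\rho$. Since $\partial_\rho(\cosh\rho-\cos y)=\sinh\rho$, the chain rule produces a factor $\sinh\rho$ that cancels the $\sinh\rho$ in the denominator of the operator $-\partial/(2\pi\sinh\rho\,\partial\rho)$, leaving $\tfrac{n+1}{4\pi}C_n\sin y\,(\cosh\rho-\cos y)^{-(n+3)/2}$. It then remains only to verify the constant, and this follows from the two elementary relations $\Gamma((n+3)/2)=\tfrac{n+1}{2}\Gamma((n+1)/2)$ and $(2\pi)^{(n+3)/2}=2\pi\,(2\pi)^{(n+1)/2}$, which together give $\tfrac{n+1}{4\pi}C_n=C_{n+2}$; hence the right-hand side is exactly $P^H_{n+2}(y,\rho)$.

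For ii), the key move is the substitution $u=\cosh s$, so that $\sinh s\,ds=du$, combined with the half-angle identity $\cosh^2(s/2)-\cosh^2(\rho/2)=(\cosh s-\cosh\rho)/2$, which turns the weight into $\sqrt2\,(u-\cosh\rho)^{-1/2}$. The integral then becomes
$$\sqrt2\,C_{n+1}\sin y\int_{\cosh\rho}^{\infty}(u-\cosh\rho)^{-1/2}(u-\cos y)^{-(n+2)/2}\,du.$$
A second shift $z=u-\cosh\rho$, with $a=\cosh\rho-\cos y>0$, reduces this to $\sqrt2\,C_{n+1}\sin y\int_0^{\infty}z^{-1/2}(z+a)^{-(n+2)/2}\,dz$, which is precisely of the form covered by the formula $B(x,y)=b^{x}\int_0^{\infty}t^{x-1}(1+bt)^{-x-y}\,dt$ of Magnus et al. used above, here with Beta-parameters $1/2$ and $(n+1)/2$; this evaluates the integral to $a^{-(n+1)/2}\sqrt\pi\,\Gamma((n+1)/2)/\Gamma((n+2)/2)$.

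The step I expect to demand the most care is the final constant bookkeeping in ii): after multiplying $\sqrt2\,C_{n+1}$ by this Beta value, one must check that the factor $\Gamma((n+2)/2)$ cancels and that $\sqrt2\,\sqrt\pi/(2\pi)^{(n+2)/2}=1/(2\pi)^{(n+1)/2}$, so that everything collapses to $C_n\sin y\,(\cosh\rho-\cos y)^{-(n+1)/2}=P^H_n(y,\rho)$. No genuine analytic obstacle arises: the argument runs parallel to the Euclidean Poisson computation in Proposition 2.3 ii), with $r^2+y^2$ replaced by $\cosh\rho-\cos y$ and $2s\,ds$ replaced by $\sinh s\,ds=d(\cosh s)$, and one should only record that $a>0$ for $\rho\ge0$ and $0<y<\pi$, which secures convergence and the applicability of the Beta integral.
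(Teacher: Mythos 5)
Your proposal is correct, and it is exactly the argument the paper intends: the paper's own ``proof'' of this proposition is merely the sentence ``simple and left to the reader,'' while its stated model is the direct proof it does give for the Euclidean Poisson recurrences (Proposition 2.3), which your computation mirrors step for step --- the chain-rule cancellation of $\sinh\rho$ together with $\Gamma((n+3)/2)=\tfrac{n+1}{2}\Gamma((n+1)/2)$ for i), and for ii) the half-angle reduction $\cosh^2(s/2)-\cosh^2(\rho/2)=(\cosh s-\cosh\rho)/2$ followed by the same Beta-integral formula of Magnus et al.\ with parameters $1/2$ and $(n+1)/2$. Your constant bookkeeping checks out ($\tfrac{n+1}{4\pi}C_n=C_{n+2}$ and $\sqrt{2\pi}\,(2\pi)^{(n+1)/2}=(2\pi)^{(n+2)/2}$), and you also correctly read the statement's $P^H_n(t,\rho)$ as the typo it is for $P^H_n(y,\rho)$, so your write-up in fact supplies details the paper omits.
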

\begin{proof}
The proof of this Proposition is simple and is left to the reader.
\end{proof}

\begin{thm}\label{Heat-Hyperbolic} The heat kernel on the hyperbolic space is given by \\
 \begin{equation}\label{pre-gruet-h}H^H_n(t, w, w')=C_n (4t)^{-1/2}\int_{\sigma-i\infty}^{\sigma+i\infty}\frac{\exp{\left(\frac{y^2}{4t}\right)}\, \sin y}{(\cosh\rho(w, w')-\cos y)^{(n+1)/2}}dy,\end{equation}
 with $C_n=\frac{\Gamma((n+1)/2)}{2^{(n+3)/2}i\pi^{(n/2+1)}}.$
\end{thm}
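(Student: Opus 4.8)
The plan is to transcribe verbatim the argument already used for the Euclidean and spherical cases (Theorems 2.5 and \ref{Heat-Spheric}), replacing only the Poisson kernel by its hyperbolic counterpart. The starting point is part ii) of Proposition \ref{1}, read at the level of integral kernels: since $H^H_n(t,\rho)$ is the kernel of $e^{t\Delta_n^H}$ and $P^H_n(y,\rho)$ is the kernel of $e^{-y\sqrt{-\Delta_n^H}}$, the operator identity yields
\begin{equation*}
H^H_n(t,\rho)=(4t)^{-1/2}\,L^{-1}_{y^2}\!\left[\frac{\sqrt{\pi}\,P^H_n(y,\rho)}{y}\right]\!\left(\tfrac{1}{4t}\right).
\end{equation*}
First I would write this inverse Laplace transform as a Bromwich contour integral in the variable $v=y^2$, namely $\frac{1}{2\pi i}\int_{\sigma-i\infty}^{\sigma+i\infty}e^{v/4t}\,\frac{\sqrt{\pi}\,P^H_n(\sqrt v,\rho)}{\sqrt v}\,dv$, and then insert the explicit hyperbolic Poisson kernel furnished by the preceding proposition.

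Next I would perform the change of variable $v=y^2$, $dv=2y\,dy$, which is the crucial algebraic step: the factor $2y$ coming from $dv$ cancels the $1/\sqrt v=1/y$ in the integrand, leaving exactly $\int e^{y^2/4t}\,P^H_n(y,\rho)\,dy$ over the image of the contour. Substituting $P^H_n(y,\rho)=\frac{\Gamma((n+1)/2)}{(2\pi)^{(n+1)/2}}\frac{\sin y}{(\cosh\rho-\cos y)^{(n+1)/2}}$ then produces precisely the integrand of \eqref{pre-gruet-h}, while collecting the numerical factors $\sqrt{\pi}$, $(2\pi)^{-(n+1)/2}$, $1/2\pi i$ and the $2$ from $dv=2y\,dy$ assembles the constant $C_n$, the factor $(4t)^{-1/2}$ remaining explicit. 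It is worth emphasizing that this is the literal analogue of the spherical computation: the hyperbolic Poisson kernel carries the trigonometric pair $\sin y,\cos y$ exactly where the spherical kernel carries $\sinh y,\cosh y$, so the two derivations differ only by this curvature-duality substitution, which is why the constant here coincides with $c_n$ of Theorem \ref{Heat-Spheric}. The resulting expression is the classical Gruet formula, recovered by purely elementary means.

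The step I expect to be the main obstacle is the rigorous justification of the contour manipulation under the substitution $v=y^2$. A vertical Bromwich line in the $v$-plane is mapped to one branch of the hyperbola $\operatorname{Re}(y^2)=\sigma$ in the $y$-plane, so rewriting the result again as $\int_{\sigma-i\infty}^{\sigma+i\infty}(\cdots)\,dy$ requires deforming this curved contour back to a vertical line. This is legitimate because the integrand $e^{y^2/4t}\sin y\,(\cosh\rho-\cos y)^{-(n+1)/2}$ is holomorphic in the region swept out between the two contours, away from the zeros of $\cosh\rho-\cos y$, and the Gaussian-type factor together with the decay of the Poisson kernel controls the arcs at infinity; one must also fix the branch of $\sqrt v$ consistently. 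Finally, care must be taken with the exact bookkeeping of the factor $2$ from $dv=2y\,dy$ against the $1/2\pi i$, since it is precisely this factor that determines the power of $2$ appearing in $C_n$.
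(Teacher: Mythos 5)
Your proposal is correct and is essentially identical to the paper's own (one-line) proof: the paper likewise obtains \eqref{pre-gruet-h} by applying part ii) of Proposition \ref{1} at the kernel level, inserting the explicit hyperbolic Poisson kernel $P^H_n(y,\rho)=\frac{\Gamma((n+1)/2)}{(2\pi)^{(n+1)/2}}\frac{\sin y}{(\cosh\rho-\cos y)^{(n+1)/2}}$ from the preceding proposition, and reading the inverse Laplace transform as a Bromwich integral with the substitution $v=y^2$; your added discussion of the hyperbola-to-vertical-line deformation is exactly the point the paper silently skips.

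One concrete caveat on your bookkeeping claim: collecting $\frac{1}{2\pi i}$, $\sqrt{\pi}$, the factor $2$ from $dv=2y\,dy$, and $(2\pi)^{-(n+1)/2}$ gives $\frac{\Gamma((n+1)/2)}{2^{(n+1)/2}\,i\,\pi^{n/2+1}}$, which is \emph{twice} the stated $C_n=\frac{\Gamma((n+1)/2)}{2^{(n+3)/2}\,i\,\pi^{n/2+1}}$. The discrepancy is in the paper, not in your method: with the value $\frac{\Gamma((n+1)/2)}{2^{(n+1)/2}\,i\,\pi^{n/2+1}}$ the formula passes the $n=1$ residue check (the two poles $y=\pm i\rho$ contribute a single residue's worth by the oddness of the integrand, giving the Gaussian $e^{-\rho^2/4t}/\sqrt{4\pi t}$), and chaining it through the corollary (split the contour, then $\sigma=\pi$, $t\mapsto t/2$) reproduces exactly the Gruet constant $K_n=\frac{\Gamma((n+1)/2)}{2^{n/2}\pi^{n/2+1}}$ of \eqref{gruet-h2}, whereas the paper's constants in \eqref{pre-gruet-h}, \eqref{gruet-h1} and \eqref{gruet-h2} are mutually inconsistent under its own stated manipulations. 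So carry out the arithmetic literally and report the corrected constant rather than asserting agreement with the stated $C_n$.
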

\begin{proof} The proof of \eqref{pre-gruet-h} uses essentially ii) of Proposition \ref{1}.
\end{proof}
\begin{cor} (Gruet formula)
\begin{align} \label{gruet-h1}H^H_n(t, w, w')=\frac{K_n}{(4t)^{1/2}}\int_0^{+\infty}{\cal R}e\left[\frac{e^{(\sigma-i\xi)^2/4 t} \sin(\sigma-i\xi)}{(\cosh \rho-\cos(\sigma-i\xi))^{(n+1)/2}}\right] d\xi,
\end{align}
$K_n=\frac{\Gamma((n+1)/2)}{2^{(n+3)/2}\pi^{(n/2+1)}}$.
\begin{align} \label{gruet-h2}H_n(t, z, z')=K_n t^{-1/2}\int_0^{+\infty}\frac{e^{(\pi^2-\xi^2)/2 t} \sinh\xi \sin \pi \xi/t}{(\cosh \rho +\cosh \xi
)^{(n+1)/2}} d\xi,
\end{align}
$K_n=\frac{\Gamma((n+1)/2)}{2^{n/2}\pi^{n/2+1}}$.
\end{cor}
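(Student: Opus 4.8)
The plan is to deduce both displays directly from the contour representation \eqref{pre-gruet-h} of Theorem \ref{Heat-Hyperbolic}, by elementary manipulations of the vertical contour, exactly in the spirit of the Euclidean and spherical Gruet formulas \eqref{gruet-e} and \eqref{gruet-s}. Formula \eqref{gruet-h1} is the hyperbolic twin of \eqref{gruet-s}, obtained by the same conjugation--symmetry trick; formula \eqref{gruet-h2} is the genuinely classical Gruet formula, which I expect to recover by specializing the free abscissa $\sigma$ of the contour to the value $\sigma=\pi$.

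For \eqref{gruet-h1} I would parametrize the contour by $y=\sigma+i\xi$, $\xi\in\R$, so that $dy=i\,d\xi$, and split $\int_{-\infty}^{\infty}=\int_{0}^{\infty}+\int_{-\infty}^{0}$. Writing $g(\xi)$ for the integrand of \eqref{pre-gruet-h} after this substitution and changing $\xi\mapsto-\xi$ in the second piece, I would use that $t,\sigma,\rho$ are real, so $y\mapsto e^{y^2/4t}\sin y\,(\cosh\rho-\cos y)^{-(n+1)/2}$ takes conjugate values at conjugate points; hence $g(-\xi)=\overline{g(\xi)}$ and $g(\xi)+g(-\xi)=2\,\mathrm{Re}\,g(\xi)$. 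This collapses the integral to $2i\int_0^{\infty}\mathrm{Re}\,g(\xi)\,d\xi$, and absorbing the factor $2i$ into $C_n$ (the $i$ in the denominator of $C_n$ is there precisely for this cancellation) produces a real constant of the form $K_n$ and the stated formula.

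For \eqref{gruet-h2} the key step is to move the abscissa to $\sigma=\pi$, which is legitimate because the integrand of \eqref{pre-gruet-h} is holomorphic in the strip $0<\mathrm{Re}\,y<2\pi$: the denominator $\cosh\rho-\cos y$ vanishes only at $y=2k\pi\pm i\rho$, all lying on the boundary lines $\mathrm{Re}\,y\in\{0,2\pi\}$, so a single-valued branch of $(\cosh\rho-\cos y)^{(n+1)/2}$ exists in the open strip (this matters for even $n$, where the exponent is a half-integer and the power genuinely branches), while the Gaussian factor $e^{y^2/4t}$, of modulus $e^{((\mathrm{Re}\,y)^2-(\mathrm{Im}\,y)^2)/4t}$, forces the horizontal connecting segments to vanish as $|\mathrm{Im}\,y|\to\infty$. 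On the line $y=\pi-i\xi$ one has the clean identities $\cos(\pi-i\xi)=-\cosh\xi$, $\sin(\pi-i\xi)=i\sinh\xi$ and $(\pi-i\xi)^2=\pi^2-\xi^2-2\pi i\xi$, which turn the denominator into $(\cosh\rho+\cosh\xi)^{(n+1)/2}$ and the exponential into $e^{(\pi^2-\xi^2)/4t}e^{-i\pi\xi/2t}$. Substituting $\sigma=\pi$ into \eqref{gruet-h1} and taking the real part collapses the bracket to $e^{(\pi^2-\xi^2)/4t}\sinh\xi\,\sin(\pi\xi/2t)\,(\cosh\rho+\cosh\xi)^{-(n+1)/2}$, which is precisely the Gruet integrand; the passage to the exact powers $e^{(\pi^2-\xi^2)/2t}$ and $\sin(\pi\xi/t)$ of \eqref{gruet-h2} is the standard rescaling $t\mapsto t/2$ reflecting the probabilistic ($e^{t\Delta/2}$) normalization in Gruet, after which the constant is fixed by bookkeeping.

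The main obstacle I anticipate is not the algebra but the legitimacy of the contour shift for even $n$: one must confirm that $(\cosh\rho-\cos y)^{(n+1)/2}$ is single-valued and holomorphic throughout the open strip $0<\mathrm{Re}\,y<2\pi$, equivalently that $\cos y\neq\cosh\rho$ there. This holds because $\cos(a+ib)=\cosh\rho$ with a real right-hand side forces $\sin a\,\sinh b=0$, whence either $b=0$ and $\cos a\le 1<\cosh\rho$, or $a=\pi$ and $\cos y=-\cosh b<0$; in both cases equality is impossible. Together with the decay of the horizontal segments, this validates the shift, and everything else---the parity argument selecting the sine, the elementary identities at $y=\pi-i\xi$, and the final numerical constant---is routine.
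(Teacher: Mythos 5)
Your proposal follows essentially the same route as the paper's proof: formula \eqref{gruet-h1} is obtained from \eqref{pre-gruet-h} by the substitution $y=\sigma+i\xi$, splitting the integral over $\R^+$ and $\R^-$ and using conjugate symmetry to collapse it to a real part (with the $i$ in $C_n$ cancelling), and \eqref{gruet-h2} follows by setting $\sigma=\pi$ and replacing $t$ by $t/2$. Your additional verification that the integrand is zero-free and holomorphic in the strip $0<\mathrm{Re}\,y<2\pi$, which legitimizes placing the contour at $\sigma=\pi$, supplies a justification that the paper's one-line proof omits but does not change the method.
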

\begin{proof}
To see \eqref{gruet-h1},
set $y=\sigma+i\xi$ in the formula \eqref{pre-gruet-h}, we get after
splitting the integral into two integrals over $\R^+$ and $\R^-$
we obtain the result.
Set $\sigma=\pi$ and replace $t$  by $t/2$  in \eqref{gruet-h1} we obtain the \eqref{gruet-h2}.
Notice that the formula \eqref{gruet-h2} is the Gruet formula on the hyperbolic space(Gruet \cite{Gruet}).
While the classical expressions for $H^H_n(t, r)$ have different forms for odd and even
dimensions, but the Gruet's formula below holds for every $n$.
\end{proof}
The integrand on the right hand side of\eqref{pre-gruet-h} is
a meromorphic function in $s$ when $n$ is odd and we can apply the residue
theorem to obtain,
for$ n=2k+1 $,
$$H_{2k+1}^{\H}(t, x, x')=\left(\frac{\partial}{-2\pi \sinh r\partial r}\right)^k \frac{e^{-r^2/4t}}{\sqrt{4\pi t}}.$$


\end{document}